\newcommand{\be}{\begin{equation}}
\newcommand{\ee}{\end{equation}}
\DeclareMathOperator{\dist}{dist}
\DeclareMathOperator{\divergence}{div}
\newtheorem{theorem}{Theorem}[section]
\newtheorem{proposition}[theorem]{Proposition}
\theoremstyle{definition}
\newtheorem{definition}[theorem]{Definition}
\theoremstyle{remark}
\newtheorem{remark}[theorem]{Remark}
\numberwithin{equation}{section}
\begin{document}

\title[The double phase problem when the lowest exponent is equal to 1]{The double phase Dirichlet problem when the lowest exponent is equal to 1}
\author{Alexandros Matsoukas}
\address{Department of Mathematics\\
School of Applied Mathematical and Physical Sciences\\
National Technical University of Athens\\
Iroon Polytexneiou 9\\
15780 Zografou\\
Greece}
\email{alexmatsoukas@mail.ntua.gr}
\author{Nikos Yannakakis}
\address{Department of Mathematics\\
School of Applied Mathematical and Physical Sciences\\
National Technical University of Athens\\
Iroon Polytexneiou 9\\
15780 Zografou\\
Greece}
\email{nyian@math.ntua.gr}
\subjclass[2000]{Primary 35J60; 35J25; 35J75 Secondary 46E35; 35J92; 35D30}
\commby{}

\begin{abstract}
In this paper we prove an existence and uniqueness result for the double phase Dirichlet problem when the lowest exponent is equal to 1. Our solution is a function of bounded variation that simultaneously lies in a suitable weighted Sobolev space and is found as the limit of a sequence of solutions of intermediate double phase Dirichlet problems whose lowest exponent $p$ goes to 1. As a result of that, our approach involves the study of some relevant properties of generalized Orlicz-Sobolev spaces.
\end{abstract}

\maketitle

\section{Introduction}
Let $\Omega\subseteq\mathbb R^n$ be a bounded domain with Lipschitz boundary, $f\in L^n(\Omega)$ and $a\in L^\infty(\Omega)$ with $a(x)\geq 0$. In this paper we address the matter of existence of solutions for the double phase Dirichlet problem
\begin{equation}
\label{1_laplacian}
\left\{
\begin{array}{rcl}
-\divergence(\displaystyle\frac{\nabla u}{|\nabla u|}+a(x)|\nabla u|^{q-2}\nabla u)&=&f\,\text{ in }\Omega\\
u&=&0\,\text{ in }\partial\Omega\,.
\end{array}
\right.
\end{equation}

The terminology double phase problem is used in the literature to describe problems as the above, that depend on two competing exponents $1\leq p<q$ and exhibit distinct growth behavior in different parts of $\Omega$, depending on which exponent is dominant there. These different parts of the domain are usually called the p and the q-phase respectively and here they are determined by the function $a$; when $a(x)=0$ we have the p-phase and when $a(x)>0$ the q-phase respectively.

Double phase problems were firstly introduced by Zhikov in \cite{Zhikov} and in recent years they have become the focus of extensive research
(see for example \cite{mingione_1}, \cite{Colasuonno}, \cite{mingione_2}, \cite{harjulehto_2}, \cite{harjulehto_1}, \cite{mingione_3}, \cite{pap_1}, \cite{pap_2}, \cite{radulescu} and the references therein). Nevertheless with the exception of \cite{harjulehto_2}, \cite{harjulehto_1} it seems that the case $p=1$ has been overlooked and hence this paper is a step towards filling this gap.

As is a common theme when dealing with problems that involve the 1-Laplacian the solution to (\ref{1_laplacian}) will be found as the limit for $p\rightarrow 1$ of the solutions $(u_p)$ of the intermediate Dirichlet problems
\begin{equation}
\label{doublephase}
\left\{
\begin{array}{rcl}
-\divergence(|\nabla u|^{p-2}\nabla u+a(x)|\nabla u|^{q-2}\nabla u)&=&f\,\text{ in }\Omega\\
u&=&0\,\text{ in }\partial\Omega\,.
\end{array}
\right.
\end{equation}
The difficulty of giving some sense to $\frac{\nabla u}{|\nabla u|}$ in (\ref{1_laplacian}) is overcomed by using the ideas of Anzellotti from \cite{Anzellotti} and the role of the above quotient will be played by a bounded vector field $z$. Note also that due to the coexistence of the 1 and the weighted q-Laplacian in (\ref{1_laplacian}), it is natural to expect that its solution should lie simultaneously in $BV(\Omega)$ and in some suitable weighted Sobolev space.

Our approach follows the ideas of \cite{Mercaldo_1}, \cite{Mercaldo_2} and our hypotheses are that the weight $a$ is Lipschitz continuous, belongs to the Muckenhoupt class $A_q$, is non-zero on the boundary of $\Omega$ and the exponents $p,q$ satisfy the inequality $\frac{q}{p}<1+\frac{1}{n}$.
\section{Preliminaries}
\subsection{Generalized Orlicz spaces}
In this part we follow \cite{paprev}. Let $\Omega\subseteq\mathbb R^n$ be a bounded domain with Lipschitz boundary, $1<p<q<\infty$ and $a\in L^\infty(\Omega)$ with $a(x)>0$ a.e. in $\Omega$. The functions
\[\theta_p, \theta_0:\Omega\times\mathbb R_+\rightarrow\mathbb R_+\]
defined by
\[\theta_p(x,t)=t^p+a(x)t^q\text{ and }\theta_0(x,t)=a(x)t^q\,,\]
are uniformly convex, generalized $\Phi$ functions.
Hence when equipped with the so-called Luxemburg norm
\[\|u\|_{\theta_r} = \inf \{ \lambda>0 : \rho_{\theta_r} (\frac{u}{\lambda}) \le 1\}\,,\]
where
\[\rho_{\theta_r}(u)=\int_{\Omega} \theta_r(x,u)\,dx\,,\]
the generalized Orlicz spaces  $L^{\theta_r}(\Omega)$, for $r=p$ and $0$, are reflexive Banach spaces. The same is true for the generalized Orlicz-Sobolev space
\[W^{1,\theta_r}(\Omega)=\left\{u\in L^{\theta_r}(\Omega): \nabla u\in L^{\theta_r}(\Omega;\mathbb R^n)\right\}\]
with norm
\[\|u\|_{1,\theta_r}=\|u\|_{\theta_r} + \|\nabla u\|_{\theta_r}\,.\]
As usual we define
\[W^{1,\theta_r}_0(\Omega)=\overline{C_{0}^{\infty}(\Omega)}^{\|\cdot\|_{1,\theta_r}}\,,\]
\begin{remark}
Note that the spaces $L^{\theta_0}(\Omega)$ and $W^{1,\theta_0}(\Omega)$ are actually weighted Lebesgue and Sobolev spaces respectively. For matters of convenience of notation we may sometimes denote the formers by $L^s_a(\Omega)$, with $a$ the weight and $s$ the corresponding exponent.
\end{remark}

If $a\in C(\overline{\Omega})$ and is non-zero on $\partial\Omega$ then we can define a trace on $W^{1,\theta_0}(\Omega)$.
\begin{proposition}
\label{trace}
Let $a\in C(\overline{\Omega})$ with $a\geq 0$ a.e. in $\Omega$, such that $a(x)\neq 0$, for all $x\in \partial\Omega$. Then there exists a bounded linear operator
\[T:W^{1,\theta_0}(\Omega)\rightarrow L^q(\partial\Omega)\]
such that
\[Tu=u|_{\partial \Omega}\,,\text{ for all }u\in C(\overline{\Omega})\cap W^{1,\theta_0}(\Omega)\,.\]
\end{proposition}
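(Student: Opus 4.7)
The plan is to exploit the fact that $a$ is bounded below by a positive constant near $\partial\Omega$, so that locally $W^{1,\theta_0}$ reduces to an ordinary (unweighted) Sobolev space, on which the classical trace theorem applies.

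First I would use the continuity of $a$ on the compact set $\partial\Omega$ together with the hypothesis $a|_{\partial\Omega}\neq 0$ to produce a constant $c>0$ and a $\delta>0$ such that $a(x)\geq c$ for every $x\in\overline{\Omega}$ with $\dist(x,\partial\Omega)<\delta$. Set $U=\{x\in\Omega:\dist(x,\partial\Omega)<\delta\}$; by shrinking $\delta$ if necessary (using the Lipschitz regularity of $\partial\Omega$), one can arrange that $U$ is itself a bounded open set with Lipschitz boundary and that $\partial\Omega\subseteq\partial U$.

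Next, I would observe that $\theta_0(x,t)=a(x)t^q$ yields the identity $\|u\|_{\theta_0}=\|a^{1/q}u\|_{L^q(\Omega)}$, so that $L^{\theta_0}(\Omega)$ is literally the weighted $L^q$ space $L^q_a(\Omega)$. On the neighborhood $U$, the inequality $a\geq c$ gives, for every $u\in W^{1,\theta_0}(\Omega)$,
\[
\int_{U}\bigl(|u|^{q}+|\nabla u|^{q}\bigr)\,dx\ \le\ \frac{1}{c}\int_{U}a(x)\bigl(|u|^{q}+|\nabla u|^{q}\bigr)\,dx\ \le\ \frac{1}{c}\bigl(\|u\|_{\theta_0}^{q}+\|\nabla u\|_{\theta_0}^{q}\bigr).
\]
Hence the restriction map $R:W^{1,\theta_0}(\Omega)\to W^{1,q}(U)$, $Ru=u|_U$, is well defined, linear, and bounded.

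I would then compose $R$ with the classical trace operator $T_U:W^{1,q}(U)\to L^q(\partial U)$ (available since $U$ has Lipschitz boundary and $1<q<\infty$), and follow it by restriction to $\partial\Omega\subset\partial U$, which is bounded from $L^q(\partial U)$ to $L^q(\partial\Omega)$. The composition $T:=(\cdot\!\upharpoonright_{\partial\Omega})\circ T_U\circ R$ is the desired bounded linear operator into $L^q(\partial\Omega)$. For $u\in C(\overline{\Omega})\cap W^{1,\theta_0}(\Omega)$, the restriction $u|_U$ is continuous on $\overline{U}$, so by the defining property of the classical trace, $T_U(u|_U)=u|_{\partial U}$, and restriction to $\partial\Omega$ gives $Tu=u|_{\partial\Omega}$.

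The main subtlety—and really the only nontrivial step—is the first one: producing a \emph{Lipschitz} collar $U$ on which $a$ is uniformly bounded below. Once this is in place, the rest is a straightforward continuous embedding into a standard Sobolev space and an application of the classical trace theorem; no density of smooth functions in $W^{1,\theta_0}(\Omega)$ is required, since the trace is constructed via the ordinary Sobolev trace on the auxiliary domain $U$.
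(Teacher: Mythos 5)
Your proposal is correct and follows essentially the same route as the paper: both localize to a collar neighborhood $U$ of $\partial\Omega$ on which $a$ is bounded below by a positive constant, identify $W^{1,\theta_0}(U)$ with $W^{1,q}(U)$ there, and then apply the classical Sobolev trace on $U$ followed by restriction to $\partial\Omega\subseteq\partial U$. The only cosmetic differences are that you make the lower bound $a\geq c$ and the Lipschitz regularity of $U$ explicit points of care, whereas the paper asserts them directly.
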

\begin{proof}
Since $a(x)\neq 0$, for all $x\in \partial\Omega$, there exists $x_0\in\partial\Omega$ such that $a(x)\geq a(x_0)$, for all $x\in \partial\Omega$. Hence, since $a$ is uniformly continuous we have that there exists $r>0$ such that
$a(x)\geq a(x_0)$, for all $x\in \overline{U}$, where
\[U=\left\{x\in\Omega: \dist(x,\partial\Omega)<r\right\}\]
and this implies (see \cite[p. 17]{kufner}) that
\[W^{1,\theta_0}(U)=W^{1,q}(U)\,.\]
Thus, since $U$ has Lipschitz boundary which contains the boundary of $\Omega$, the existence of the trace operator in $W^{1,q}(U)$ implies the existence of $c>0$ such that
\[\int_{\partial\Omega}|u|^q\,d\mathcal{H}^{n-1}\leq \int_{\partial U}|u|^q\,d\mathcal{H}^{n-1}\leq c\|u\|_{W^{1,p}(U)}\,,\text{ for all } u\in C^1(\overline{U})\,,\]
where by $\mathcal{H}^{n-1}$ we denote the $n-1$ dimensional Hausdorff measure.

Hence the linear operator $S$ defined by $Su=u|_{\partial \Omega}$, for $u\in C^1(\overline{U})$ with values in $L^q(\partial\Omega)$ is bounded and (by density) can be extended to the whole of $W^{1,q}(U)$. Note that as in the classical case it is easy to see that
\[Su=u|_{\partial \Omega},\text{ for all }u\in C(\overline{U})\cap W^{1,q}(U)\,.\]
We now define
\[T:W^{1,\theta_0}(\Omega)\rightarrow L^q(\partial\Omega)\,,\]
by
\[Tu=S(u|_U)\,.\]
The operator $T$ is bounded since our hypothesis on $a$ implies that the operator  $u\rightarrow u|_U$, from $W^{1,\theta_0}(\Omega)$ into $W^{1,q}(U)$ is bounded.

Moreover if $u\in C(\overline{\Omega})\cap W^{1,\theta_0}(\Omega)$, then $u|_{\overline{U}}\in C(\overline{U})\cap W^{1,q}(U)$ and hence
\[Tu=S(u|_U)=u|_{\partial\Omega}\,.\]
\end{proof}
\begin{remark}
The hypothesis that $a$ is non-zero on the boundary of $\Omega$ is not restricting with respect to the double-phase nature of our problem, since the set
$$\left\{x\in \Omega: a(x)=0\right\}$$
still determines the p-phase inside $\Omega$.
\end{remark}
The above allows us to identify $W^{1,\theta_0}_0(\Omega)$ as the space of functions on $\Omega$ with zero boundary values.
\begin{proposition}
\label{zerotrace}
Let $a\in C(\overline{\Omega})$ with $a\geq 0$ a.e. in $\Omega$, such that $a(x)\neq 0$, for all $x\in \partial\Omega$. If $T$ is the trace operator defined in Proposition \ref{trace}, then
\[\ker T=W^{1,\theta_0}_0(\Omega)\,.\]
\end{proposition}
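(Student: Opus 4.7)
The equality splits into two inclusions. The inclusion $W^{1,\theta_0}_0(\Omega)\subseteq\ker T$ is immediate: by definition any $u$ in the closure is a $\|\cdot\|_{1,\theta_0}$-limit of $\phi_k \in C_{0}^{\infty}(\Omega)$, for which trivially $T\phi_k=0$, so continuity of $T$ (Proposition \ref{trace}) yields $Tu=\lim T\phi_k=0$.

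For the reverse inclusion, let $u\in\ker T$; the strategy is to decompose $u$ into a near-boundary piece and an interior piece with compact support in $\Omega$, and approximate each separately. With $U=\{x\in\Omega:\dist(x,\partial\Omega)<r\}$ as in Proposition \ref{trace}, on which $W^{1,\theta_0}(U)=W^{1,q}(U)$ with equivalent norms, I choose a smooth cut-off $\eta\in C^{\infty}(\overline{\Omega})$ with $\eta\equiv 1$ on $\Omega\setminus U$ and $\eta\equiv 0$ on $\{x\in\Omega:\dist(x,\partial\Omega)<r/2\}$, and write $u=(1-\eta)u+\eta u$. The boundary piece $(1-\eta)u$ is supported in $\overline{U}$; as an element of $W^{1,q}(U)$ its trace vanishes on each component of $\partial U$, namely on $\partial\Omega$ because $Tu=0$ and on the inner boundary $\Gamma_r=\{\dist(x,\partial\Omega)=r\}\cap\overline{\Omega}$ because $1-\eta$ itself vanishes there. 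Hence $(1-\eta)u|_U\in W^{1,q}_0(U)$; taking $\psi_k\in C_0^{\infty}(U)$ with $\psi_k\to (1-\eta)u$ in $W^{1,q}(U)$ and extending by zero to $\Omega$, the resulting $\tilde\psi_k$ lie in $C_0^{\infty}(\Omega)$ and converge to $(1-\eta)u$ in $\|\cdot\|_{1,\theta_0}$, by the equivalence of norms on $U$ and the fact that both sides vanish off $U$.

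The interior piece $\eta u$ has compact support $K\subset\Omega$ and belongs to $W^{1,\theta_0}(\Omega)$. I would approximate it by the mollification $\phi_{\varepsilon}*(\eta u)$, which lies in $C_0^{\infty}(\Omega)$ for $\varepsilon<\dist(K,\partial\Omega)$; the convergence $\phi_{\varepsilon}*(\eta u)\to\eta u$ in $W^{1,\theta_0}(\Omega)$ is the content of the standard density and mollification results for generalized Orlicz-Sobolev spaces in \cite{paprev}, made available by the $\Delta_2$-property $\theta_0(x,2t)=2^q\theta_0(x,t)$. Adding the two sequences of approximants places $u$ in $W^{1,\theta_0}_0(\Omega)$. \emph{Main obstacle.} The subtle step is this last convergence: because $a$ can vanish in the interior of $\Omega$, the weighted $L^q$ norm is not controlled by the unweighted one, so convergence of mollifications cannot be obtained by a naive estimate and one has to rely on the general Orlicz-Sobolev machinery of \cite{paprev} rather than a direct computation.
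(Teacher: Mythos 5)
Your overall architecture is sound and is, in substance, what the paper's one-line proof (``follows easily from the classical case'') must mean: the inclusion $W^{1,\theta_0}_0(\Omega)\subseteq\ker T$ is immediate from continuity of $T$, and for the converse the cut-off decomposition $u=(1-\eta)u+\eta u$ correctly isolates the boundary piece, where $a$ is bounded below, so that $W^{1,\theta_0}$ and $W^{1,q}$ coincide there with equivalent norms and the classical identification of $W^{1,q}_0(U)$ with the zero-trace subspace of $W^{1,q}(U)$ applies. Your verification that the zero extensions of the $C_0^{\infty}(U)$ approximants converge in $\|\cdot\|_{1,\theta_0}$ is also fine, since $a\in L^\infty$ controls the weighted modular by the unweighted $W^{1,q}(U)$ norm and everything is supported in $\overline{U}$.

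The gap is exactly where you locate it, but the repair you offer does not work. The $\Delta_2$ identity $\theta_0(x,2t)=2^{q}\theta_0(x,t)$ gives separability of $L^{\theta_0}(\Omega)$ and the equivalence of norm and modular convergence; it does \emph{not} give convergence of mollifications, i.e.\ density of smooth functions, in a degenerate weighted Sobolev space. For weights that vanish inside $\Omega$ this density can genuinely fail --- this is Zhikov's $H\neq W$ phenomenon, treated in the very reference \cite{Zhikov2} the paper cites --- so no general Orlicz--Sobolev machinery applies without a quantitative hypothesis on $a$. What actually makes $\rho_\varepsilon\ast(\eta u)\to\eta u$ in $W^{1,\theta_0}(\Omega)$ is the uniform boundedness of the convolution operators on the weighted space, which the paper derives from the Muckenhoupt condition $a\in A_q$ in the discussion preceding Proposition \ref{Meyers-Serrin}. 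That condition is part of $(H_0)$ but is not among the hypotheses of Proposition \ref{zerotrace} as stated; your proof of the interior step (and, for that matter, any proof reducing to the classical case) needs it or a substitute. With $a\in A_q$ assumed, your argument closes.
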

\begin{proof}
The proof follows easily from the classical case.
\end{proof}
\begin{remark}
\label{p_zero_trace}
We have that
\[W^{1,\theta_p}(\Omega)\hookrightarrow W^{1,\theta_0}(\Omega)\]
and hence the restriction of $T$ on $W^{1,\theta_p}(\Omega)$ is the trace operator on $W^{1,\theta_p}(\Omega)$, whose kernel is the space $W^{1,\theta_p}_0(\Omega)$.
\end{remark}

If $p<n$ then the following Sobolev embedding holds

\[W_0^{1,\theta_p}(\Omega ) \hookrightarrow L^{p^{*}}(\Omega)
\]
where $p^*=\frac{np}{n-p}$. In particular
\[\|u\|_{p^*}\leq S(n,p)\|u\|_{1,\theta_p}\,,\text{ for all }u\in W_0^{1,\theta_p}(\Omega)\,,\]
where $S(n,p)$ is the best constant of the classical Sobolev embedding. Moreover, as it was showed in \cite{talenti}
\begin{equation}
\label{constant}
\lim_{p\to 1}S(n,p)=S(n,1)
\end{equation}
and this will be useful in the sequel.

With the additional assumption $\frac{q}{p}<1+\frac{1}{n}$  the following Poincar\'e type inequality holds
\[\|u\|_{\theta_p} \leq c \|\nabla u\|_{\theta_p}\,,\text{ for all }u\in W_0^{1,\theta_p}(\Omega)\,,\]
for some $c>0$.

We will also need the definition of the Muckenhoupt class $A_q$.
\begin{definition}
A weight $a\in L^\infty(\Omega)$ with $a(x)>0$ a.e. in $\Omega$ belongs to the Muckenhoupt class $A_q$ if
\[\sup_Q\left(\frac{1}{|Q|}\int_Q a(x) dx\right) \left(\frac{1}{|Q|}\int_Q a(x)^{-\frac{1}{q-1}} dx\right)^{q-1} < \infty,\]
where the supremum is taken over all cubes $Q$ with sides parallel to the coordinate axes.
\end{definition}
For more details on generalized Orlicz spaces we refer the interested reader to the book \cite{harjbook}.
\subsection{Functions of bounded variation and Anzellotti's theory}
A function $u \in L^1(\Omega)$ belongs to $BV(\Omega)$ if its distributional derivative $Du$ is a Radon measure.
The total variation of the measure $Du$ is given by
\[|Du|(\Omega)=\sup \{ \langle Du, \phi \rangle : \phi \in C^\infty_0(\Omega), \|\phi \|\le 1\}.\]
When equipped with the norm
\[\|u\|_{BV}=\|u\|_{L^1(\Omega)}+|Du|(\Omega)\,,\]
the space $BV(\Omega)$ becomes a Banach space and possesses the following important compactness property: if $(u_n)$ is a bounded sequence in $BV(\Omega)$ then there exists a subsequence, which for convenience we denote again by $(u_n)$, and a function $u\in BV(\Omega)$ such that
\begin{itemize}
\item $Du_n\rightarrow Du$, $w^\ast$ as measures in $\Omega$ and
\item $u_n\rightarrow u$, in $L^1(\Omega)$.
\end{itemize}

Let $z\in L^{\infty}(\Omega, \mathbb R^n)$ be a vector field whose $\divergence z$ in the sense of distributions belongs to  $L^\infty(\Omega)$, i.e.
\[\langle \divergence z,\phi \rangle=-\int_\Omega z\,\nabla \phi\, dx\,,\text{ for all }\phi \in C_0^\infty(\Omega).\]
If $u\in BV(\Omega)$ one may define the distribution $(z,Du): C_{0}^{\infty}(\Omega) \to \mathbb{R}$ as
\begin{equation}
\label{anzellotti}
\langle (z,Du), \phi \rangle=-\int_\Omega u z \nabla \phi\, dx-\int_\Omega u\phi \divergence z\, dx.
\end{equation}

By Anzellotti's theory \cite{Anzellotti} we can define the weak trace on $\partial\Omega$ of the normal component of $z$, denoted by $[z,v]$ for which we have the following generalized Green's formula
\begin{equation}
\label{Green}
\int_\Omega u \divergence z\,dx+\int_\Omega (z,Du)=\int_{\partial\Omega} u [z,v]\,d\mathcal{H}^{n-1}\,.
\end{equation}

\section{Main results}
As usual a function $u\in W^{1,\theta_p}_{0}(\Omega)$ is said to be a weak solution of the double phase Dirichlet problem (\ref{doublephase}) if
\begin{equation}
\label{eq1}
\int_{\Omega} (|\nabla u|^{p-2}\nabla u+a(x)|\nabla u|^{q-2}\nabla u) \nabla v dx=\int_{\Omega}f v dx,\
\end{equation}
for all $v \in W^{1,\theta_p}_{0}(\Omega)$.

Our hypotheses on the weight function $a$ and the exponents $p,q$ are
\[(H_0): a\in C^{0,1}(\overline{\Omega})\cap A_q\,, a(x)\neq 0\text{ on }\partial\Omega\,,  1<p<q<n\text{ and }\frac{q}{p}<1+\frac{1}{n}\,,\]
where by $C^{0,1}(\overline{\Omega})$ we denote the space of Lipschitz functions defined on $\overline{\Omega}$. We have the following.
\begin{theorem}{\cite[Theorem 1.1]{liu}}
\label{liu}
If $(H_0)$ holds and $f\in L^n(\Omega)$, then there exists a unique weak solution of problem (\ref{doublephase}), for every $p>1$.
\end{theorem}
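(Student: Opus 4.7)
The plan is to produce the weak solution as the unique minimizer on $W_0^{1,\theta_p}(\Omega)$ of the natural energy functional associated with (\ref{doublephase}), namely
\[
J(u) = \int_\Omega\left(\frac{|\nabla u|^p}{p}+\frac{a(x)|\nabla u|^q}{q}\right)dx-\int_\Omega f u\,dx,
\]
and then to check that its Euler--Lagrange equation coincides with (\ref{eq1}). The first integral is finite on $W_0^{1,\theta_p}(\Omega)$ because its integrand is comparable to $\theta_p(x,|\nabla u|)$ and hence to the modular $\rho_{\theta_p}(\nabla u)$; the linear term is continuous (hence weakly continuous) on $W_0^{1,\theta_p}(\Omega)$ by H\"older combined with the Sobolev embedding $W_0^{1,\theta_p}(\Omega)\hookrightarrow L^{p^*}(\Omega)$, since $\tfrac{1}{n}+\tfrac{1}{p^*}=\tfrac{1}{p}\le 1$.

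Coercivity of $J$ would follow by combining the Poincar\'e type inequality $\|u\|_{\theta_p}\le c\|\nabla u\|_{\theta_p}$, available precisely under the assumption $q/p<1+1/n$, with the modular-to-norm estimate $\rho_{\theta_p}(\nabla u)\ge\|\nabla u\|_{\theta_p}^p$ valid whenever $\|\nabla u\|_{\theta_p}\ge 1$. The linear term is controlled by $c\,\|f\|_n\,\|\nabla u\|_{\theta_p}$, so the superlinear modular part dominates ($p>1$) and $J(u)\to+\infty$ as $\|u\|_{1,\theta_p}\to\infty$. Weak lower semicontinuity is standard: the integrand $F(x,\xi)=|\xi|^p/p+a(x)|\xi|^q/q$ is a non-negative Carath\'eodory function convex in $\xi$, so the modular functional is sequentially weakly lower semicontinuous on the reflexive space $W_0^{1,\theta_p}(\Omega)$, while the linear term is weakly continuous.

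The direct method now yields a minimizer: a minimizing sequence is bounded by coercivity, hence admits a weakly convergent subsequence by reflexivity, and its weak limit minimizes $J$ by lower semicontinuity. Uniqueness is immediate from strict convexity, since for $p,q>1$ both $t\mapsto t^p$ and $t\mapsto t^q$ are strictly convex, making $F(x,\cdot)$ and so $J$ strictly convex. Computing $\tfrac{d}{dt}J(u+tv)|_{t=0}=0$ for $v\in W_0^{1,\theta_p}(\Omega)$ then reproduces (\ref{eq1}). The genuine technical work lies not in this variational scheme but in the functional-analytic foundation on which it rests — reflexivity and density of smooth functions in $W_0^{1,\theta_p}(\Omega)$, the sharp modular-norm comparison, and the applicability of Ioffe-type convex-integrand semicontinuity results in the generalized Orlicz setting — all of which are guaranteed here by the uniform convexity of $\theta_p$ and by the Muckenhoupt $A_q$ assumption on the weight $a$.
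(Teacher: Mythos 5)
Your argument is correct, but note that the paper does not prove this statement at all: it is imported wholesale as Theorem~1.1 of Liu--Dai \cite{liu}, so what you have written is a self-contained substitute rather than a variant of an internal proof. Your variational scheme is the natural one and all of its ingredients are already available in the paper's Preliminaries under $(H_0)$: reflexivity of $W_0^{1,\theta_p}(\Omega)$, the embedding $W_0^{1,\theta_p}(\Omega)\hookrightarrow L^{p^*}(\Omega)$ making $u\mapsto\int_\Omega fu\,dx$ a bounded linear functional for $f\in L^n(\Omega)$, and the Poincar\'e inequality (valid precisely because $q/p<1+1/n$), which together with the modular--norm comparison $\rho_{\theta_p}(\nabla u)\ge\|\nabla u\|_{\theta_p}^{p}$ for $\|\nabla u\|_{\theta_p}\ge1$ yields coercivity. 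Two small points deserve one extra line each. First, for weak lower semicontinuity you do not need Ioffe-type theorems: the modular is convex and strongly lower semicontinuous (Fatou), hence weakly lower semicontinuous by Mazur's lemma. Second, ``uniqueness from strict convexity'' needs the observation that strict convexity of $\xi\mapsto|\xi|^p$ only forces $\nabla u_1=\nabla u_2$ a.e., which Poincar\'e then upgrades to $u_1=u_2$ in $W_0^{1,\theta_p}(\Omega)$; and that every weak solution, not just every minimizer, is covered, because for a convex G\^ateaux-differentiable functional the critical points are exactly the global minimizers (the differentiation under the integral sign being justified by the dominated-convergence bound $\bigl(|\nabla u|+|\nabla v|\bigr)^{p-1}|\nabla v|\le C\bigl(|\nabla u|^p+|\nabla v|^p\bigr)$ and its weighted $q$-analogue). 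With these remarks your proof stands on its own; an equivalent route, closer in spirit to how such results are usually quoted, is to observe that the double phase operator is bounded, continuous, coercive and strictly monotone from $W_0^{1,\theta_p}(\Omega)$ to its dual and to invoke the Minty--Browder theorem, which gives existence and uniqueness in one stroke.
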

\begin{remark}
\label{estimate}
Let $p^\ast=\frac{np}{n-p}$. If $f\in L^{p_\ast}(\Omega)$ with $p_\ast$ given by $\frac{1}{p_\ast}+\frac{1}{p^\ast}=1$, then an application of H{\"o}lder's inequality gives us
\[ \|f\|_{p_\ast}\le |\Omega|^{1-\frac{1}{p}} \|f\|_n\,.\]
\end{remark}
\begin{proposition}
\label{p_limit}
Assume $(H_0)$ holds and $f\in L^n(\Omega)$ satisfies
$$\|f\|_n(S(n,1)+1) <1\,.$$
If $(u_p)$ are the weak solutions of the Dirichlet problem (\ref{doublephase}), then there exist a function $u\in BV(\Omega)\cap W^{1,\theta_0}_0(\Omega)$ and a vector field $z\in L^{\infty}(\Omega ; \mathbb{R}^{n})$, with $\|z\|_{L^\infty(\Omega; \mathbb R^n)}\le 1$ such that, up to subsequences,

\begin{eqnarray*}
u_p &\to& u\,, \text{ in } L^1(\Omega)\,,\\
\nabla u_p &\overset{w}{\rightharpoonup}&\nabla u\,, \text{ in } L^{\theta_0} (\Omega; \mathbb R^n),\\
|\nabla u|^{p-2}\nabla u &\overset{w}{\rightharpoonup}& z\,, \text{ in }  L^r (\Omega; \mathbb R^n)\,,
\end{eqnarray*}
for all $1\le r<\infty$.
\end{proposition}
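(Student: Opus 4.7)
The plan is to derive uniform \emph{a priori} estimates for the approximating solutions $u_p$ and then extract successive (sub)sequential limits. Testing (\ref{eq1}) against $u_p$ itself gives the energy identity
\[
\int_\Omega |\nabla u_p|^p\,dx + \int_\Omega a(x)|\nabla u_p|^q\,dx = \int_\Omega f u_p\,dx.
\]
On the right-hand side, H\"older's inequality with exponents $n$ and $n/(n-1)$ combined with the classical Sobolev embedding $W^{1,1}_0(\Omega)\hookrightarrow L^{n/(n-1)}(\Omega)$ (whose sharp constant is $S(n,1)$, available to $u_p$ since $W^{1,p}_0\subset W^{1,1}_0$) gives $\int_\Omega f u_p\,dx\le\|f\|_n S(n,1)\|\nabla u_p\|_1$. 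The elementary inequality $t\le 1+t^p$ for $t\ge 0$ produces $\|\nabla u_p\|_1\le|\Omega|+\int_\Omega|\nabla u_p|^p\,dx$, and the offending $|\nabla u_p|^p$ term on the right can be absorbed into the left precisely because our hypothesis $\|f\|_n(S(n,1)+1)<1$ forces $\|f\|_n S(n,1)<1-\|f\|_n<1$. The outcome is a uniform bound, independent of $p$ close to $1$, on $\int|\nabla u_p|^p\,dx$, on $\int a|\nabla u_p|^q\,dx$, and (via Sobolev once more) on $\|u_p\|_{BV(\Omega)}$.

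From the uniform $BV$ bound and the compact embedding $BV(\Omega)\hookrightarrow L^1(\Omega)$, a subsequence satisfies $u_p\to u$ in $L^1(\Omega)$ with $u\in BV(\Omega)$. The bound on $\int a|\nabla u_p|^q\,dx$ means $(\nabla u_p)$ is bounded in the reflexive space $L^{\theta_0}(\Omega;\mathbb R^n)$, so after a further extraction $\nabla u_p\rightharpoonup g$ in $L^{\theta_0}$. Any $\phi\in C^\infty_0(\Omega)$ belongs to the dual $L^{\theta_0^*}$ thanks to the Muckenhoupt condition $a\in A_q$, which ensures $a^{-1/(q-1)}\in L^1$, and hence passing to the limit in $\int\nabla u_p\cdot\phi\,dx=-\int u_p\,\divergence\phi\,dx$ identifies $g=\nabla u$ distributionally. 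The chain $\|u_p\|_{\theta_0}\le\|u_p\|_{\theta_p}\le c\|\nabla u_p\|_{\theta_p}$, using monotonicity of the modulars and the Poincar\'e-type inequality for $W^{1,\theta_p}_0$ recalled in Section 2, then shows $(u_p)$ is bounded in $W^{1,\theta_0}$, so the weak convergence lifts to that space. Continuity of the trace $T$ (Proposition \ref{trace}), the vanishing $Tu_p=0$ via Remark \ref{p_zero_trace}, and Proposition \ref{zerotrace} together give $u\in W^{1,\theta_0}_0(\Omega)$.

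For the third convergence, the identity $\||\nabla u_p|^{p-2}\nabla u_p\|_r^r=\int|\nabla u_p|^{(p-1)r}\,dx$ combined with $t^s\le 1+t^p$ for $0<s\le p$ shows that for every fixed $r<\infty$ the family $|\nabla u_p|^{p-2}\nabla u_p$ is bounded in $L^r$ once $p$ is close enough to $1$. A diagonal extraction along $r_k\uparrow\infty$ produces a single vector field $z$ that is the weak limit in every $L^r$. The bound $\|z\|_\infty\le 1$ comes from weak lower semicontinuity together with the sharper H\"older estimate $\int|\nabla u_p|^{(p-1)r}\,dx\le\bigl(\int|\nabla u_p|^p\,dx\bigr)^{(p-1)r/p}|\Omega|^{1-(p-1)r/p}$, which in the limit $p\to 1$ yields $\|z\|_r\le |\Omega|^{1/r}$; sending $r\to\infty$ forces $\|z\|_\infty\le 1$.

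The step I expect to be the main obstacle is the identification of the weighted-gradient limit and the zero-trace property for $u$ in the framework of $W^{1,\theta_0}_0(\Omega)$. The weak convergence in $L^{\theta_0}$ only tests against functions in the weighted dual $L^{\theta_0^*}$, so the $A_q$ hypothesis on $a$ must be invoked to admit ordinary smooth test functions, and the zero-trace property must be transferred from $W^{1,\theta_p}_0$ to $W^{1,\theta_0}_0$ by upgrading $L^1$ convergence to weak $W^{1,\theta_0}$ convergence; it is this interplay between the weighted space $L^{\theta_0}$ and the BV-level estimate that requires the most care.
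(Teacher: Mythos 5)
Your proof is correct and reaches all the stated conclusions, but the key a priori estimate is obtained by a genuinely different mechanism than in the paper. You test the weak formulation with $u_p$ itself, estimate $\int_\Omega f u_p\,dx$ via H\"older and the \emph{fixed} sharp embedding $W^{1,1}_0(\Omega)\hookrightarrow L^{n/(n-1)}(\Omega)$ with constant $S(n,1)$, and then absorb $\int_\Omega|\nabla u_p|^p\,dx$ into the left-hand side using $t\le 1+t^p$ and the smallness hypothesis; this yields uniform bounds on $\int_\Omega|\nabla u_p|^p\,dx$, $\int_\Omega a|\nabla u_p|^q\,dx$ and $\|u_p\|_{BV}$ without ever touching the Luxemburg norm. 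The paper instead tests with $u_p/\lambda_p$, where $\lambda_p=\|\nabla u_p\|_{\theta_p}$, uses the $p$-dependent embedding $W_0^{1,\theta_p}(\Omega)\hookrightarrow L^{p^*}(\Omega)$ together with Talenti's limit (\ref{constant}) and Remark \ref{estimate}, and bounds the normalized modular from below by $\min\{\lambda_p^{p-1},\lambda_p^{q-1}\}$ to conclude $\lambda_p<1$; that normalization buys the clean estimate $\int_\Omega a(x)|\nabla u_p|^q\,dx\le 1$, i.e. (\ref{q_estimate}), which is quoted again in the proof of Proposition \ref{q_limit}, whereas your argument produces the (equally sufficient) uniform bound $\|f\|_nS(n,1)|\Omega|$. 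Your route arguably uses the hypothesis more transparently (only $\|f\|_nS(n,1)<1$ is needed at this stage) and avoids the convergence of sharp Sobolev constants altogether. The remaining steps --- compactness in $BV(\Omega)$, weak compactness of $(\nabla u_p)$ in the reflexive space $L^{\theta_0}(\Omega;\mathbb R^n)$, identification of the limit using $a\in A_q$ to admit $C^\infty_0$ test functions in the weighted duality, weak closedness of $\ker T$ for the zero-trace property, the H\"older bound $\int_\Omega|\nabla u_p|^{(p-1)r}\,dx\le\bigl(\int_\Omega|\nabla u_p|^p\,dx\bigr)^{(p-1)r/p}|\Omega|^{1-(p-1)r/p}$ and the diagonal argument giving $\|z\|_{L^r}\le|\Omega|^{1/r}\to 1$ --- coincide with the paper's, and in places (the duality justification and the trace argument) you are more explicit than the paper itself.
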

\begin{proof}
Let
\[\lambda_p=\|\nabla u_p\|_{\theta_p}=\inf\left\{\lambda >0: \int_\Omega (|\frac{\nabla u_p}{\lambda}|^p+a(x)|\frac{\nabla u_p}{\lambda}|^q)\,dx \le 1\right\}\,.\]
Thus
\[\int_\Omega (|\frac{\nabla u_p}{\lambda_p}|^p+a(x)|\frac{\nabla u_p}{\lambda_p}|^q)\,dx=1.\]
Choosing $u_p/\lambda_p$ as a test function in (\ref{eq1}) we get
\begin{equation}\label{weakform}
\frac{1}{\lambda_p}\int_\Omega (|\nabla u_p|^p+a(x)|\nabla u_p|^q)\, dx=\frac{1}{\lambda_p}\int_\Omega f u_p\, dx\,.
\end{equation}
By H{\"o}lder's inequality, the Sobolev embedding, Remark \ref{estimate} and (\ref{constant}) for $p$ close to $1$ we have
\begin{eqnarray*}
I=\frac{1}{\lambda_p}\int_\Omega (|\nabla u_p|^p+a(x)|\nabla u_p|^q)\,dx &\le &\frac{1}{\lambda_p}\|f\|_{p_\ast} \|u_p\|_{p^\ast}\\
&\le &\frac{S(n,p)}{\lambda_p} \|f\|_{p_\ast}\|\nabla u_p\|_{\theta_p}\\
&\le &S(n,p) |\Omega|^{1-\frac{1}{p}} \|f\|_n\,,\\
&\le &(S(n,1)+1) \|f\|_n\,.
\end{eqnarray*}
To estimate $I$ on the left hand side, we see that
\begin{eqnarray*}
I &=&{\lambda_p}^{p-1} \int_\Omega |\frac{|\nabla u_p|}{\lambda_p}|^p\,dx + {\lambda_p}^{q-1} \int_{\Omega} a(x) |\frac{\nabla u_p}{\lambda_p}|^q\, dx\\
&\ge &\min\{{\lambda_p}^{p-1},{\lambda_p}^{q-1}\} \int_\Omega (|\frac{|\nabla u_p|}{\lambda_p}|^p + a(x) |\frac{\nabla u_p}{\lambda_p}|^q)\, dx\\
&=&\min\{{\lambda_p}^{p-1},{\lambda_p}^{q-1}\}.\
\end{eqnarray*}
Hence
\[\min\{{\lambda_p}^{p-1},{\lambda_p}^{q-1}\}<1\,\]
for $p$ close to $1$ and $\lambda_p=\|\nabla u_p\|_{\theta_p}<1$.

Since $L^{\theta_p}(\Omega) \hookrightarrow L^1(\Omega)$ and $L^{\theta_p}(\Omega) \hookrightarrow L^{\theta_0}(\Omega)$ we also have
\[ \int_\Omega  |\nabla u_p| \le C \|\nabla u_p\|_{\theta_p} \le C\]
and
\begin{equation}
\label{q_estimate}
\left( \int_\Omega a(x) |\nabla u_p|^q \right)^{\frac{1}{q}} \le \|\nabla u_p\|_{\theta_p} \le 1\,.
\end{equation}
Thus $(u_p)$ is bounded in $BV(\Omega)\cap W_0^{1,\theta_0}(\Omega)$ and so passing to a subsequence, which for simplicity we denote again by $(u_p)$, we get a function $u\in BV(\Omega)\cap W_0^{1,\theta_0}(\Omega)$ such that
\begin{eqnarray*}
\nabla u_p &\overset{w^{*}}{\rightharpoonup}& Du\,, \text{ as measures in } \Omega,\\
\nabla u_p &\overset{w}{\rightharpoonup}& \nabla u\,, \text{ in } L^{\theta_0} (\Omega; \mathbb R^n),\\
u_p &\to & u \text{ in }  L^1(\Omega)\,, \text{ and a.e. in }  \Omega\,.
\end{eqnarray*}
Let $1\le r <\infty $. Then since $L^{\theta_p}(\Omega) \hookrightarrow L^p(\Omega)$ we obtain
\begin{eqnarray*}
\int_\Omega |\nabla u_p|^{(p-1)r}\,dx &\le &|\Omega|^{1-\frac{(p-1)r}{p}}\left( \int_\Omega |\nabla u_p|^{p}\,dx \right)^{\frac{(p-1)r}{p}}\\
&\le & |\Omega|^{1-\frac{(p-1)r}{p}} \|\nabla u_p\|_{\theta_p}^{(p-1)r}\\
&\le & |\Omega|^{1-\frac{(p-1)r}{p}}\\
&\le &(|\Omega|+1)\,.
\end{eqnarray*}
Thus, we have
\begin{equation}
\label{bound}
\||\nabla u|^{p-2}\nabla u\|_{L^r(\Omega; \mathbb{R}^{n})} \le (|\Omega|+1)^{\frac{1}{r}}\,,\text{ for all }1\le r < \infty
\end{equation}
and hence we may pass to a further subsequence, still denoted by
$$(|\nabla u_p|^{p-2}\nabla u_p)\,,$$
which converges weakly to a vector field $z_r\in L^r(\Omega; \mathbb R^n)$.

Proceeding with a diagonal argument we may extract a common subsequence $(|\nabla u_p|^{p-2}\nabla u_p)$ and a vector field $z \in L^r(\Omega; \mathbb R^n)$, independent of $r$, such that
\begin{equation*}
|\nabla u_p|^{p-2}\nabla u_p \overset{w}{\rightharpoonup} z \,, \text{ in } L^r (\Omega; \mathbb R^n ),\
\end{equation*}
for all $1\le r<\infty$. Using (\ref{bound}) and the lower semicontinuiuty of the norm, we get
\[\|z\|_{L^r(\Omega; \mathbb R^n)}\le (|\Omega|+1)^{\frac{1}{r}},\]
for all $1\le r < \infty$ and hence  $z\in L^{\infty}(\Omega; \mathbb R^n)$. Letting $r\to \infty$ we have
\[\|z\|_{L^\infty(\Omega; \mathbb R^n)} \le 1.\]
\end{proof}

The following Meyers-Serrin type theorem will be important in the sequel. Note that our assumption that the weight function belongs to the Muckenhoupt class is equivalent to the uniform boundedness of the convolution operators and thus, as was pointed out in \cite{Zhikov2}, the Meyers-Serrin theorem is valid in the corresponding weighted Sobolev space $ W^{1,\theta_0}(\Omega)$ .

\begin{proposition}
\label{Meyers-Serrin}
Assume that $(H_0)$ holds. If $u\in BV(\Omega)\cap W_0^{1,\theta_{0}}(\Omega)$, then there exists a sequence $(v_n)$ in $W^{1,1}(\Omega)\cap C^\infty(\Omega)$ such that
\begin{eqnarray*}
v_n &\to& u\,, \text{ in } L^1(\Omega)\,,\\
\int |\nabla v_n|\,dx &\to& |Du|(\Omega)\,,\\
\nabla v_n &\to& \nabla u\,, \text{ in } L^{\theta_0}(\Omega).
\end{eqnarray*}
\end{proposition}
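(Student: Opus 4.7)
The plan is to combine the classical Anzellotti--Giaquinta smooth approximation scheme for $BV$ functions with the Meyers--Serrin density theorem in the weighted Sobolev space $W^{1,\theta_{0}}(\Omega)$, whose validity under the Muckenhoupt $A_{q}$ hypothesis on $a$ is precisely the content of the paragraph preceding the statement. The aim is to exhibit a \emph{single} sequence of mollified approximations, built from one partition of unity, realising all three convergences simultaneously.

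First, I fix an exhaustion $\Omega_{0}\subset\subset\Omega_{1}\subset\subset\cdots\subset\Omega$ by open sets with $\bigcup_{k}\Omega_{k}=\Omega$, put $U_{0}=\Omega_{1}$ and $U_{k}=\Omega_{k+1}\setminus\overline{\Omega_{k-1}}$ for $k\geq 1$, and choose a subordinate partition of unity $\zeta_{k}\in C_{c}^{\infty}(U_{k})$ with $\sum_{k}\zeta_{k}\equiv 1$ on $\Omega$. With $\phi_{\varepsilon}$ a standard mollifier and radii $(\varepsilon_{k})$ to be determined, I define
\[v=\sum_{k}(u\zeta_{k})\ast\phi_{\varepsilon_{k}}.\]
Since the sum is locally finite in $\Omega$, one has $v\in C^{\infty}(\Omega)$.

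Next, the classical Anzellotti--Giaquinta argument guarantees that, for any $\delta>0$, the $\varepsilon_{k}$'s can be taken so small that $\|v-u\|_{L^{1}(\Omega)}<\delta$ and $\int_{\Omega}|\nabla v|\,dx<|Du|(\Omega)+\delta$; combined with $L^{1}$-lower semicontinuity of the total variation, this delivers the first two required convergences along a diagonal sequence $v_{n}$. For the third convergence, I would use that $u\in W^{1,\theta_{0}}_{0}(\Omega)$ gives $u\zeta_{k}\in W^{1,\theta_{0}}(\Omega)$ with $\nabla(u\zeta_{k})=\zeta_{k}\nabla u+u\nabla\zeta_{k}\in L^{\theta_{0}}(\Omega;\mathbb{R}^{n})$. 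The $A_{q}$ hypothesis yields uniform boundedness of the mollifiers on $L^{\theta_{0}}$ and the continuity property $\phi_{\varepsilon}\ast g\to g$ in $L^{\theta_{0}}$ for compactly supported $g$, so one can further shrink each $\varepsilon_{k}$ to ensure
\[\bigl\|\nabla(u\zeta_{k})\ast\phi_{\varepsilon_{k}}-\nabla(u\zeta_{k})\bigr\|_{L^{\theta_{0}}(\Omega)}<\frac{\delta}{2^{k+1}}.\]
The identity $\nabla u=\sum_{k}\nabla(u\zeta_{k})$, which follows from $\sum_{k}\zeta_{k}\equiv 1$, then gives $\|\nabla v-\nabla u\|_{L^{\theta_{0}}(\Omega)}<\delta$.

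The step I expect to require most care is the \emph{harmonisation} of the two families of constraints on the radii $\varepsilon_{k}$, since the $BV$ approximation and the weighted Sobolev density theorem are traditionally carried out in isolation and use different norms to measure how fine the mollification must be. This turns out not to be a genuine obstacle: each of the three conditions demands only that every $\varepsilon_{k}$ be sufficiently small, so a single diagonal selection yields one sequence $v_{n}\in C^{\infty}(\Omega)$ realising all three convergences at once. Finally, that each $v_{n}$ lies in $W^{1,1}(\Omega)$ globally, and not merely locally, follows from $u\in L^{1}(\Omega)$ together with the total-variation bound built into the construction.
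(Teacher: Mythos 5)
Your construction is essentially the paper's own proof: the authors also take a partition of unity subordinate to a covering of $\Omega$ as in Attouch--Buttazzo--Michaille, mollify each piece $u\phi_i$ with radii chosen small enough to meet the $L^1$, total-variation, and weighted $L^q$ constraints simultaneously (the last justified exactly by the $A_q$ hypothesis and the resulting validity of Meyers--Serrin in $W^{1,\theta_0}(\Omega)$), and then diagonalize. No substantive difference in decomposition, key lemma, or selection of the radii.
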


\begin{proof}
Let $\varepsilon >0$. We can define an open covering of $\Omega$ given by a family of open sets $(C_i)_{i\in \mathbb{N}^{*}}$  defined as in \cite[Theorem 10.1.2]{Buttazzo} and $(\phi_i)_{i\in \mathbb{N}^{*}}$ a partition of unity subordinate to the covering. We consider also the standard mollifiers $(\rho_{\varepsilon_i})_{i\in \mathbb{N}^{*}}$. As we have already mentioned since $a\in A_q$ the Meyers-Serrin theorem is valid in $W^{\theta_0}(\Omega)$. Thus we may choose $\varepsilon_i>0$ for each $i\in \mathbb{N}^{*}$, such that
\begin{eqnarray*}
\text{ supp } \left(\rho_{\varepsilon_i} \ast (\phi_i u)\right) &\subset& C_i,\\
\int_\Omega |\rho_{\varepsilon_i} \ast (\phi_i u)-\phi_i u|\,dx &<& \frac{\varepsilon}{2^i},\\
|\int_\Omega |\rho_{\varepsilon_i} \ast (\phi_i Du)|\,dx- \int_\Omega |\phi_1 Du| | &<& \varepsilon,\\
\int_\Omega |\rho_{\varepsilon_i} \ast (u \nabla \phi_i)-u \nabla \phi_i |\,dx &<& \frac{\varepsilon}{2^i},\\
\int_\Omega a(x) |\rho_{\varepsilon_i} \ast ( \nabla (u \phi_i))- \nabla (u\phi_i) |^{q}\,dx &<& \left(\frac{\varepsilon}{2^i}\right)^q\,.
\end{eqnarray*}
Hence, defining $v_\varepsilon=\sum_{i=1}^\infty\rho_{\varepsilon_i} \ast (\phi_i u)$ and then arguing as in \cite[Theorem 10.1.2]{Buttazzo}, we have that $v_\varepsilon \in C^{\infty}(\Omega)\cap W^{1,1}(\Omega)$ and approximates $u$ in the sense that
\begin{eqnarray*}
\int_\Omega |v_\varepsilon-u|\, dx &<& \varepsilon,\\
|\int_\Omega |Dv_\varepsilon|- \int_\Omega |Du| | &<& 4\varepsilon\,,
\end{eqnarray*}
and
\begin{eqnarray*}
\left( \int_\Omega a(x) |\nabla v_\varepsilon- \nabla u |^q\, dx \right) ^{\frac{1}{q}} < \varepsilon\,.
\end{eqnarray*}
Hence letting $v_n=v_{\frac{1}{n}}$, for all $n\in\mathbb N$, we obtain the required result.
\end{proof}
\begin{remark}
\label{BVWTrace}
By \cite[Remark 10.2.1]{Buttazzo} we have that the function $u\in BV(\Omega)$ and the approximating sequence $(v_n)$ in Proposition \ref{Meyers-Serrin} coincide on the boundary of $\Omega$ in the sense of the $BV$-trace. The same is true for the trace in $W_0^{1,\theta_0}(\Omega)$ since $v_\varepsilon-u$ in the above proof  is the strong limit in $W_0^{1,\theta_0}(\Omega)$ of the sequence
\begin{equation}
\nonumber
v_{\varepsilon,k}-u_k=\sum_{i=1}^k (\rho_{\varepsilon_i}\ast (u\phi_i)-u\phi_i)
\end{equation}
whose trace is 0 and the result follows from the continuity of the trace operator.
\end{remark}
As the following Proposition illustrates, the convergence of the gradients $\nabla u_p$ to $\nabla u$ in $L^{\theta_0}(\Omega, \mathbb R^n)$ as $p\rightarrow 1$, is actually strong
\begin{proposition}
\label{q_limit}
Under the hypotheses of Proposition \ref{p_limit} we have that
\begin{equation*}
\nabla u_p  \to \nabla u \text{ in }  L^{\theta_0}(\Omega; \mathbb R^n)\,.
\end{equation*}
\end{proposition}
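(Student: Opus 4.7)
The aim is to upgrade the weak $L^{\theta_0}(\Omega;\R^n)$ convergence of $\nabla u_p$ from Proposition \ref{p_limit} to strong convergence, equivalently $\int_\Omega a(x)|\nabla u_p-\nabla u|^q\,dx\to 0$. The plan is to exploit the strong monotonicity of the weighted $q$-Laplacian, testing (\ref{eq1}) against a smooth approximant of $u$ and treating the degenerate $1$-Laplacian piece by a lower-semicontinuity bound.

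Since $u\in W_0^{1,\theta_0}(\Omega)$, pick $(\psi_n)\subset C_c^\infty(\Omega)$ with $\psi_n\to u$ in $W^{1,\theta_0}(\Omega)$; each such $\psi_n$ lies in $W^{1,\theta_p}_0(\Omega)$ for every $p>1$, so $u_p-\psi_n$ is an admissible test function in (\ref{eq1}). Substituting it and applying the classical monotonicity inequality for $\xi\mapsto|\xi|^{q-2}\xi$ (directly for $q\ge 2$, and via the standard two-step refinement for $1<q<2$) to the weighted $q$-term yields
\begin{equation*}
c_q\int_\Omega a|\nabla u_p-\nabla\psi_n|^q\,dx \le \int_\Omega f(u_p-\psi_n)\,dx - \int_\Omega|\nabla u_p|^p\,dx + R_p(\psi_n),
\end{equation*}
where $R_p(\psi_n):=\int_\Omega|\nabla u_p|^{p-2}\nabla u_p\cdot\nabla\psi_n\,dx-\int_\Omega a|\nabla\psi_n|^{q-2}\nabla\psi_n\cdot(\nabla u_p-\nabla\psi_n)\,dx$.

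The first limit is $p\to 1$. Proposition \ref{p_limit} gives $\int|\nabla u_p|^{p-2}\nabla u_p\cdot\nabla\psi_n\to\int z\cdot\nabla\psi_n$, and since $a|\nabla\psi_n|^{q-2}\nabla\psi_n$ sits in the dual of $L^{\theta_0}$ one has $\int a|\nabla\psi_n|^{q-2}\nabla\psi_n\cdot\nabla u_p\to\int a|\nabla\psi_n|^{q-2}\nabla\psi_n\cdot\nabla u$; the $L^1$-convergence of $u_p$ together with $f\in L^n$ handles $\int f(u_p-\psi_n)$. The delicate piece is $\int|\nabla u_p|^p$: Hölder's inequality $\int|\nabla u_p|\le|\Omega|^{(p-1)/p}\bigl(\int|\nabla u_p|^p\bigr)^{1/p}$ combined with the $L^1$-lower semicontinuity of the total variation along $u_p\to u$ yields $\liminf_{p\to 1}\int|\nabla u_p|^p\,dx\ge|Du|(\Omega)$. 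Letting then $n\to\infty$ and using $\psi_n\to u$ in $W^{1,\theta_0}$ together with the embedding $W^{1,\theta_0}_0(\Omega)\hookrightarrow W^{1,1}_0(\Omega)\hookrightarrow L^{n/(n-1)}(\Omega)$ (so that $\int f(u-\psi_n)\to 0$), every $\psi_n$-dependent piece collapses and one is left with
\begin{equation*}
c_q\limsup_n\limsup_{p\to 1}\int_\Omega a|\nabla u_p-\nabla\psi_n|^q\,dx \le \int_\Omega z\cdot\nabla u\,dx-|Du|(\Omega).
\end{equation*}
Since $\|z\|_{L^\infty}\le 1$ and $u\in W^{1,1}(\Omega)$ has $|Du|(\Omega)=\int_\Omega|\nabla u|\,dx$ (no singular part), the right-hand side is $\le 0$. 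A triangle inequality in $L^q(\Omega,a\,dx)$ together with $\nabla\psi_n\to\nabla u$ closes the argument.

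The main obstacle lies in the first limit, specifically the bound $\liminf_{p\to 1}\int|\nabla u_p|^p\,dx\ge|Du|(\Omega)$: this is the point at which the $1$-Laplacian nature of the limit problem enters, and without it one loses the $-|Du|(\Omega)$ that cancels $\int z\cdot\nabla u$ at the end. A secondary subtlety is that the monotonicity step must be performed at the level of the smooth approximants $\psi_n$ rather than of $u$; using $\nabla u$ directly would require an \emph{a priori} identification of the weak limit of $a|\nabla u_p|^{q-2}\nabla u_p$ with $a|\nabla u|^{q-2}\nabla u$, which is precisely what strong convergence provides.
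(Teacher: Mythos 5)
Your overall architecture is the right one and matches the paper's: test the weak formulation with $u_p$ minus a smooth approximant of $u$, use H\"older/Young plus lower semicontinuity of the total variation to extract $-|Du|(\Omega)$ from the $p$-term, let $|\nabla u_p|^{p-2}\nabla u_p\rightharpoonup z$ handle the remaining first-phase term, and remove the approximant at the end. The endgame is where you diverge harmlessly: you go from the vanishing of the monotonicity quantity directly to strong convergence via the strong monotonicity of $\xi\mapsto|\xi|^{q-2}\xi$, whereas the paper extracts pointwise convergence of the gradients (Boccardo--Murat--Puel) and then invokes the Radon--Riesz property of the uniformly convex space $L^{\theta_0}$; either works once the $\limsup$ inequality is in hand.

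The genuine gap is in your choice of approximating sequence. You take $\psi_n\in C_c^\infty(\Omega)$ with $\psi_n\to u$ only in $W^{1,\theta_0}(\Omega)$, i.e.\ in the norm weighted by $a$. Since $a$ vanishes on the $p$-phase (this is the whole point of the double phase structure), this convergence gives \emph{no} control on any unweighted quantity: $\int_\Omega|\nabla\psi_n|\,dx$ may blow up, so $\int_\Omega z\cdot\nabla\psi_n\,dx$ is neither bounded by $|Du|(\Omega)$ nor convergent to $\int_\Omega z\cdot\nabla u\,dx$; the claimed embedding $W_0^{1,\theta_0}(\Omega)\hookrightarrow W_0^{1,1}(\Omega)$ is false for a degenerate weight, so $\int f(u-\psi_n)\to 0$ is also unjustified; and the assertion that $u\in W^{1,1}(\Omega)$ with $|Du|(\Omega)=\int_\Omega|\nabla u|\,dx$ has no basis --- $u$ is only known to lie in $BV(\Omega)\cap W_0^{1,\theta_0}(\Omega)$ and $Du$ may carry a singular part, which is precisely why the problem is posed in $BV$. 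The paper's Proposition \ref{Meyers-Serrin} (a Meyers--Serrin theorem valid simultaneously in $BV$ and in the weighted space, resting on $a\in A_q$) is exactly the missing ingredient: it supplies approximants $v$ with $\int_\Omega|\nabla v|\,dx\to|Du|(\Omega)$ \emph{and} $\nabla v\to\nabla u$ in $L^{\theta_0}$ at the same time, so that one never needs to identify $\lim\int z\cdot\nabla v$ with $\int z\cdot\nabla u$ but only to bound $\int z\cdot\nabla v\le\int|\nabla v|\approx|Du|(\Omega)$, which is what cancels the $-|Du|(\Omega)$ term. Without such a simultaneous approximation your final right-hand side cannot be shown to be $\le 0$.
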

\begin{proof}
We first show that
\begin{equation*}
\lim_{p\to 1}\int_\Omega a(x)\left( |\nabla u_p|^{q-2}\nabla u_p-|\nabla u|^{q-2} \nabla u \right) \nabla (u_p-u)\,dx=0\,.
\end{equation*}
By Proposition \ref{p_limit}
\begin{equation*}
\nabla u_p \overset{w}{\rightharpoonup} \nabla u \text{ in } L^{\theta_0}(\Omega, \mathbb R^n)
\end{equation*}
and hence
\[\lim_{p\to 1}\int_{\Omega} a(x) |\nabla u|^{q-2} \nabla u \nabla (u_{p}-u)\,dx=0\,.\]
Thus we only need to show that
\[\lim_{p\to 1}\int_{\Omega} a(x) |\nabla u_{p}|^{q-2} \nabla u_{p} \nabla (u_{p}-u)\,dx=0\,.\]
To this end let $\varepsilon>0$. Then by Proposition \ref{Meyers-Serrin} there exists $v\in W^{1,1}(\Omega)\cap C^\infty(\Omega)$ such that
\[\int_{\Omega} |f| |u-v|\,dx + | \int_{\Omega} |\nabla v|\,dx-|Du|(\Omega)|<\frac{\varepsilon}{2}\]
and
\begin{equation}
\label{q_smooth_estimate}
(\int_\Omega a(x)|\nabla u-\nabla v|^q\, dx)^\frac{1}{q}<\frac{\varepsilon}{2}\,.
\end{equation}
Using Remarks \ref{p_zero_trace} and \ref{BVWTrace} we may use $u_p-v$ in the weak formulation (\ref{eq1}) and obtain
\[\int_\Omega (|\nabla u_p|^{p-2}\nabla u_p+a(x)|\nabla u_p|^{q-2}\nabla u_p) \nabla (u_p-v)\, dx=\int_\Omega f (u_p-v)\,dx,\]
or equivalently,
\[\int_\Omega |\nabla u_p|^p\,dx-\int_\Omega |\nabla u_p|^{p-2}\nabla u_p\nabla v\,dx+\int_\Omega a(x)|\nabla u_p|^{q-2}\nabla u_p \nabla (u_p-v) \, dx=\]
\[=\int_\Omega f (u_p-v)\,dx\,.\]
By Young's inequality we obtain
\[
\int_\Omega|\nabla u_p| \le \frac{1}{p}\int_{\Omega}|\nabla u_p|^p +\frac{p-1}p|\Omega|\,,
\]
and so we have
\[
p\int_\Omega |\nabla u_p|\,dx-\int_\Omega |\nabla u_p|^{p-2}\nabla u_p\nabla v\,dx+\int_\Omega a(x)|\nabla u_p|^{q-2}\nabla u_p \nabla (u_p-v) \,dx\]
\[\le\int_\Omega f (u_p-v) \,dx+(p-1)|\Omega|\,.\]
Letting $p\to 1$ using the lower semicontinuity of the total variation and the fact that $|\nabla u_p|^{p-2}\nabla u_p \overset{w}{\rightharpoonup} z$ we have
\[|Du|(\Omega)+\limsup_{p\to 1}\int_\Omega a(x)|\nabla u_p|^{q-2}\nabla u_p \nabla (u_p-v) \,dx\le\]
\[\le\int_\Omega f (u-v)\,dx+\int_\Omega z \cdot \nabla v\,dx\,.\]
Now since $\|z\|_{L^\infty(\Omega; \mathbb R^n)}\le 1$ we get
\[
|\int_\Omega z\nabla v|\,dx\le \|z\|_{L^\infty(\Omega; \mathbb R^n)} \int_\Omega |\nabla v|\,dx\le \int_\Omega |\nabla v|\,dx\,\]
and thus we conclude that
\begin{eqnarray}
\nonumber
\limsup_{p\to 1}\int_\Omega a(x)|\nabla u_p|^{q-2}\nabla u_p \nabla (u_p-v) \,dx\\
\label{limsup}
\le \int_\Omega f (u-v)\,dx+\int_\Omega|\nabla v|\,dx-|Du|(\Omega)<\frac{\varepsilon}{2}\,.
\end{eqnarray}`
Using H{\"o}lder's inequality, (\ref{q_estimate}) and (\ref{q_smooth_estimate}) we have
\begin{eqnarray*}
&&\int_\Omega a(x)|\nabla u_p|^{q-2}\nabla u_p \nabla (u_p-u) \,dx=\\
&=&\int_\Omega a(x)|\nabla u_p|^{q-2}\nabla u_p \nabla (u_p-v) \,dx +\int_\Omega a(x)|\nabla u_p|^{q-2}\nabla u_p \nabla (v-u) \,dx\\
&\leq&\int_\Omega a(x)|\nabla u_p|^{q-2}\nabla u_p \nabla (u_p-v) \,dx\\
&+&(\int_\Omega a(x)|\nabla u_p|^q \,dx)^\frac{1}{q'}(\int_\Omega a(x)|\nabla v-\nabla u|^q \,dx)^\frac{1}{q}\\
&<&\int_\Omega a(x)|\nabla u_p|^{q-2}\nabla u_p \nabla (u_p-v) \,dx+\frac{\varepsilon}{2}\,,
\end{eqnarray*}
where $\frac{1}{q}+\frac{1}{q'}=1$.
Passing to the limit as $p\to 1$ and using (\ref{limsup}) we get
\[\limsup_{p\to 1}\int_\Omega a(x)|\nabla u_p|^{q-2}\nabla u_p \nabla (u_p-u) \,dx<\varepsilon\,,\]
for every $\varepsilon>0$ and thus
\begin{equation}
\label{zero}
\lim_{p\to 1}\int_{\Omega} a(x)\left( |\nabla u_p|^{q-2}\nabla u_p-|\nabla u|^{q-2} \nabla u \right) \nabla (u_p-u)\,dx=0\,
\end{equation}
since by convexity, the integrand is non-negative.

As  $a(x)>0$ a.e. in $\Omega$ we may now apply a similar argument as the one in \cite[Lemma 5]{Boccardo} to get that $\nabla u_{p} \to \nabla u$ pointwise in $\Omega$. Combining this with the fact that $(|\nabla u_{p}|^{q-2} \nabla u_p)$ is bounded in  $L^{q'}_a(\Omega; \mathbb{R}^n)$, for $\frac{1}{q}+\frac{1}{q'}=1$, we conclude that
\begin{equation*}
|\nabla u_{p}|^{q-2} \nabla u_p \overset{w}{\rightharpoonup} |\nabla u|^{q-2} \nabla u\,, \text{ in } L^{q'}_a(\Omega; \mathbb{R}^n)\,.
\end{equation*}
Therefore we get that
\begin{eqnarray*}
\lim_{p\to 1} \int_{\Omega} a(x) |\nabla u_{p}|^{q}\,dx&=& \lim_{p\to 1}\int_{\Omega} a(x) |\nabla u|^{q-2} \nabla u \nabla (u_{p}-u)\,dx\\
& + &\lim_{p\to 1}\int_{\Omega} a(x) |\nabla u_p|^{q-2} \nabla u_p \nabla u\,dx\\
& =& \int_{\Omega} a(x) |\nabla u|^{q}\,dx\,.
\end{eqnarray*}
By uniform convexity the space $L^{\theta_0}(\Omega,\mathbb R^n)$ possesses the Radon-Riesz property and thus
\[\nabla u_p \to \nabla u\text{ in }L^{\theta_0}(\Omega,\mathbb R^n)\]
as was required.
\end{proof}
We now introduce the notion of solution of the Dirichlet problem (\ref{1_laplacian}).
\begin{definition}\label{solution} A function $u\in BV(\Omega)\cap W_0^{\theta_0}(\Omega)$ is said to be a solution of the Dirichlet problem (\ref{1_laplacian}), if there exists a vector field $z\in L^\infty(\Omega ; \mathbb R^n)$ with $\|z\|_\infty\le 1$ such that
\begin{eqnarray*}
\int_\Omega z \nabla \phi + a(x)|\nabla u|^{q-2}\nabla u \nabla \phi\,dx &=& \int_\Omega f \phi \,dx, \text{ for all } \phi \in C^\infty_0(\Omega),\\
-\divergence z-\Delta^a_q u &=&f, \text{ as distributions in }  \Omega,\\
(z,Du) &=&|Du|, \text{ as measures in } \Omega\,.
\end{eqnarray*}
\end{definition}
\begin{remark} If $u$ is a solution of the Dirichlet problem (\ref{1_laplacian}) applying Green's formula (\ref{Green}) we have the following weak formulation
\begin{eqnarray*}
\int_{\Omega} |Du| - \int_{\Omega} (z,Dv)+ \int_{\Omega} a(x) |\nabla u|^{q-2} \nabla u \nabla (u-v)dx = \int_{\Omega} f (u-v)dx,\
\end{eqnarray*}
for all $v\in BV(\Omega)\cap W_0^{\theta_0}(\Omega)$.
\end{remark}
We are now ready for our existence result.
\begin{theorem}
\label{main}
Assume $(H_0)$ holds and $f\in L^n(\Omega)$ satisfies
$$\|f\|_n(S(n,1)+1) <1\,.$$
Then there exists a solution of problem (\ref{1_laplacian}) in the sense of Definition \ref{solution}.
\end{theorem}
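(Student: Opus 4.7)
The plan is to construct the pair $(u,z)$ as the $p\to 1$ limit of $(u_p,|\nabla u_p|^{p-2}\nabla u_p)$ and verify the three requirements of Definition \ref{solution}. Propositions \ref{p_limit} and \ref{q_limit} immediately deliver, along a subsequence, $u\in BV(\Omega)\cap W_0^{1,\theta_0}(\Omega)$ and $z\in L^\infty(\Omega;\mathbb R^n)$ with $\|z\|_\infty\le 1$ such that $u_p\to u$ in $L^1(\Omega)$, $\nabla u_p\to\nabla u$ strongly in $L^{\theta_0}(\Omega;\mathbb R^n)$, $|\nabla u_p|^{p-2}\nabla u_p\rightharpoonup z$ weakly in every $L^r(\Omega;\mathbb R^n)$ with $r<\infty$, and (from the proof of Proposition \ref{q_limit}) $|\nabla u_p|^{q-2}\nabla u_p\rightharpoonup|\nabla u|^{q-2}\nabla u$ weakly in $L^{q'}_a(\Omega;\mathbb R^n)$. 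Taking any $\phi\in C_0^\infty(\Omega)$ and passing to the limit in \eqref{eq1} then produces both the integral identity of Definition \ref{solution} and the distributional equation $-\divergence z-\Delta^a_q u=f$.

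The delicate step is the measure identity $(z,Du)=|Du|$. The inequality $(z,Du)\le|Du|$ as measures is automatic from $\|z\|_\infty\le 1$, so it suffices to show $\int_\Omega(z,Du)\ge|Du|(\Omega)$. Testing \eqref{eq1} against $u_p\in W_0^{1,\theta_p}(\Omega)$ and bounding $|\nabla u_p|\le p^{-1}|\nabla u_p|^p+(p-1)/p$ via Young's inequality yields
\[
p\int_\Omega|\nabla u_p|\,dx+\int_\Omega a(x)|\nabla u_p|^q\,dx\le\int_\Omega fu_p\,dx+(p-1)|\Omega|.
\]
Taking the liminf as $p\to 1$ and using lower semicontinuity of the total variation, the strong $L^{\theta_0}$ convergence of the gradients (Proposition \ref{q_limit}), and $u_p\to u$ in $L^1$ with $f\in L^n$, I obtain $|Du|(\Omega)+\int_\Omega a(x)|\nabla u|^q\,dx\le\int_\Omega fu\,dx$. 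For the matching Green-type equality I approximate $u$ by the Meyers-Serrin smoothings $v_k\in C^\infty(\Omega)\cap W^{1,1}(\Omega)$ from Proposition \ref{Meyers-Serrin}, which by Remark \ref{BVWTrace} belong to $W_0^{1,\theta_0}\cap W_0^{1,1}$ and satisfy $\int|\nabla v_k|\,dx\to|Du|(\Omega)$, $\nabla v_k\to\nabla u$ in $L^{\theta_0}$, and $v_k\to u$ in $L^1$. Extending the weak formulation from $C_0^\infty$ to $v_k$ by density and sending $k\to\infty$, the weighted term converges to $\int_\Omega a(x)|\nabla u|^q\,dx$ by the strong $L^{\theta_0}$ convergence, the right-hand side converges to $\int_\Omega fu\,dx$ by Sobolev embedding, and the limit of the $z$-term is identified with $\int_\Omega(z,Du)$ through the strict $BV$ convergence $v_k\to u$ and the Anzellotti pairing. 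This gives
\[
\int_\Omega(z,Du)+\int_\Omega a(x)|\nabla u|^q\,dx=\int_\Omega fu\,dx,
\]
whose comparison with the previous inequality produces $|Du|(\Omega)\le\int_\Omega(z,Du)$ and hence the claimed measure identity.

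The main obstacle is justifying this Green-type pairing: Anzellotti's theory as recalled in the preliminaries requires $\divergence z\in L^\infty$, whereas here $-\divergence z=f+\divergence(a|\nabla u|^{q-2}\nabla u)$ only sits in a negative Sobolev space. The rescue is that the weighted $q$-Laplacian contribution is handled directly by the strong $L^{\theta_0}$ convergence of the approximating gradients, so the genuine pairing is between $z$ and $Du$; the identification $\lim_k\int_\Omega z\cdot\nabla v_k\,dx=\int_\Omega(z,Du)$ under strict $BV$ convergence of $v_k\to u$ reduces to a standard extension of Anzellotti's theory where $\divergence z$ is allowed to lie in $L^n$, which is exactly where $f$ lives.
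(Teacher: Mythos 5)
Your proposal is correct in its overall structure and coincides with the paper's proof for the first two conditions of Definition \ref{solution} (passing to the limit in \eqref{eq1} against $\phi\in C_0^\infty(\Omega)$ using Propositions \ref{p_limit} and \ref{q_limit}), but for the key identity $(z,Du)=|Du|$ you take a genuinely different route. The paper localizes: it tests the approximating problems with $u_p\phi$ for $0\le\phi\in C_0^\infty(\Omega)$, passes to the limit to get a pointwise-in-$\phi$ inequality, and then compares with the expression for $\int_\Omega fu\phi\,dx$ obtained by pairing the distributional equation with $u\phi$ and invoking only the local definition \eqref{anzellotti} of the pairing; this yields $\int_\Omega\phi|Du|\le\int_\Omega\phi(z,Du)$ directly as measures and never needs a Green formula up to the boundary. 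You instead work globally: testing with $u_p$ gives $|Du|(\Omega)+\int_\Omega a|\nabla u|^q\,dx\le\int_\Omega fu\,dx$, and you recover the matching equality $\int_\Omega(z,Du)+\int_\Omega a|\nabla u|^q\,dx=\int_\Omega fu\,dx$ via the Meyers--Serrin approximants $v_k$; the global comparison then forces equality of the measures (a valid step, since a signed measure dominated by $|Du|$ with total mass at least $|Du|(\Omega)$ must equal $|Du|$). What your route buys is a shorter limit computation with no cut-off bookkeeping; what it costs is that the identification $\lim_k\int_\Omega z\cdot\nabla v_k\,dx=\int_\Omega(z,Du)$ requires the continuity of the Anzellotti pairing under strict $BV$ convergence with matching traces, \emph{and} an extension of that theory to a field $z$ whose divergence is only $f+\divergence(a|\nabla u|^{q-2}\nabla u)\in L^n(\Omega)+W^{-1,q'}_a(\Omega)$ --- a strictly stronger input than the local formula \eqref{anzellotti}, which is all the paper uses. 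You correctly flag this as the main obstacle; to make it airtight you should either prove the extended Green formula (checking also that the $v_k$ have zero trace in the $W^{1,1}$/$BV$ sense, which follows because $a$ is bounded below near $\partial\Omega$ so $u\in W^{1,q}_0$ there), or note that the difference $\int_\Omega z\cdot\nabla v_k-\langle(z,Du),\phi\rangle$ can be controlled by the distributional identity itself, which essentially reproduces the paper's localized argument.
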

\begin{proof}
Let $(u_p)$ be the solutions of the Dirchlet problems (\ref{doublephase}). Then for every $p$ we have that
\begin{equation}
\label{p_problem}
\int_\Omega|\nabla u_p|^{p-2}\nabla u_p\nabla \phi\,dx+\int_\Omega a(x)|\nabla u_p|^{q-2}\nabla u_p\nabla \phi\,dx=\int_\Omega f\phi\,dx\,,
\end{equation}
for all $\phi\in C^\infty_0(\Omega)$.

Using Propositions \ref{p_limit} and \ref{q_limit} we get that there exists $u\in BV(\Omega)\cap W_0^{\theta_0}(\Omega)$ and a vector field $z\in L^\infty(\Omega ; \mathbb R^n)$ with $\|z\|_{L^\infty(\Omega; \mathbb R^n)} \le 1$, such that passing to the limit as $p\rightarrow 1$ in (\ref{p_problem}) we have
\begin{equation}
\nonumber
\int_\Omega z \nabla \phi\,dx + \int_\Omega a(x)|\nabla u|^{q-2}\nabla u \nabla \phi\,dx = \int_\Omega f \phi \,dx, \text{ for all } \phi \in C^\infty_0(\Omega)
\end{equation}
and hence also
\begin{equation}
\label{distributions}
-\divergence(z+a(x)|\nabla u|^{q-2}\nabla u)=f\text{ as distributions in }  \Omega\,.
\end{equation}

Using Anzelloti's theory we get that
\[|(z,Du)|\leq \|z\|_\infty |Du|\]
and thus since $\|z\|_{L^\infty(\Omega; \mathbb R^n)} \le 1$ we have that
\[(z,Du)\leq |Du|\,,\text{ as measures in }\Omega\,.\]
To prove the reverse inequality let $\phi\in C^\infty_0(\Omega)$ with $\phi(x)\geq 0$. Using $u_p\phi$ as a test function in (\ref{p_problem}) we get that
\[\int_\Omega|\nabla u_p|^{p-2}\nabla u_p\nabla (u_p\phi)\,dx+\int_\Omega a(x)|\nabla u_p|^{q-2}\nabla u_p\nabla (u_p\phi)\,dx=\int_\Omega fu_p\phi\,dx\]
and hence
\begin{eqnarray}
\nonumber
\int_\Omega\phi|\nabla u_p|^p\,dx+\int_\Omega u_p|\nabla u_p|^{p-2}\nabla u_p\nabla \phi\,dx+\int_\Omega a(x)\phi|\nabla u_p|^q\,dx\\
\label{equalityasmeasures}
+\int_\Omega a(x)u_p\phi|\nabla u_p|^{q-2}\nabla u_p\nabla \phi\,dx
=\int_\Omega fu_p \phi\,dx\,.
\end{eqnarray}
Moreover by Young's inequality we get that
\begin{equation}
\nonumber
\int_\Omega\phi|\nabla u_p|\,dx\leq\frac{1}{p}\int_\Omega\phi|\nabla u_p|^p\,dx+\frac{p-1}{p}\int_\Omega \phi\,dx\,.
\end{equation}
Therefore together with (\ref{equalityasmeasures}) we have
\begin{eqnarray}
\nonumber
p\int_\Omega\phi|\nabla u_p|\,dx+\int_\Omega u_p|\nabla u_p|^{p-2}\nabla u_p\nabla \phi\,dx+\int_\Omega a(x)\phi|\nabla u_p|^q\,dx\\
\label{equalityasmeasures2}
+\int_\Omega a(x)u_p|\nabla u_p|^{q-2}\nabla u_p\nabla \phi\,dx
\leq\int_\Omega fu_p \phi\,dx+(p-1)\int_\Omega \phi\,dx\,.
\end{eqnarray}
Using Propositions \ref{p_limit}, \ref{q_limit} and the lower semicontinuity of the total variation, passing to the limit in (\ref{equalityasmeasures2})  as $p\rightarrow 1$ we get that
\begin{eqnarray}
\nonumber
\int_\Omega\phi|Du|\,dx+\int_\Omega uz\nabla \phi\,dx+\int_\Omega a(x)\phi|\nabla u|^q\,dx\\
\label{ending}
+\int_\Omega a(x)u|\nabla u|^{q-2}\nabla u\nabla \phi\,dx
=\int_\Omega fu\phi\,dx\,.
\end{eqnarray}
On the other hand since $u\in W^{1,\theta_0}_0(\Omega)$ by density we have that
\begin{eqnarray*}
\nonumber
\langle -\divergence a(x)|\nabla u|^{q-2}\nabla u, u\phi\rangle&=&\int_\Omega a(x)u|\nabla u|^{q-2}\nabla u\nabla\phi\,dx\\
&+&\int_\Omega a(x)\phi|\nabla u|^q\,dx\,.
\end{eqnarray*}
Thus by (\ref{distributions}) and also using (\ref{anzellotti}) we get that
\begin{eqnarray*}
\int_\Omega fu\phi\,dx&=&\int_\Omega \phi(z,Du)+\int_\Omega uz\nabla\phi\,dx\\
&+&\int_\Omega a(x)u|\nabla u|^{q-2}\nabla u\nabla\phi\,dx+\int_\Omega a(x)\phi|\nabla u|^q\,dx\,.
\end{eqnarray*}
Combining this with (\ref{ending}) we get
\[\int_\Omega\phi |Du|\leq \int_\Omega \phi(z,Du)\]
and the proof is completed.
\end{proof}
As the following Proposition illustrates the solution found in Theorem \ref{main} is unique.
\begin{proposition}
Under the same hypotheses as of Theorem \ref{main}, the solution of problem (\ref{1_laplacian}) is unique.
\end{proposition}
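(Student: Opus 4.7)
The plan is to let $(u_1, z_1)$ and $(u_2, z_2)$ be two solutions of (\ref{1_laplacian}) in the sense of Definition \ref{solution} and to deduce $u_1 = u_2$ by a cross-testing argument. Since $u_1, u_2 \in BV(\Omega)\cap W_0^{1,\theta_0}(\Omega)$, both are admissible test functions in the weak formulation recorded in the Remark following Definition \ref{solution}. I would insert $v = u_2$ into that identity for $(u_1, z_1)$, insert $v = u_1$ into the identity for $(u_2, z_2)$, and add; the right-hand sides cancel and one is left with
\[
\int_\Omega \bigl(|Du_1| - (z_2, Du_1)\bigr) + \int_\Omega \bigl(|Du_2| - (z_1, Du_2)\bigr) + \int_\Omega a(x)\bigl(|\nabla u_1|^{q-2}\nabla u_1 - |\nabla u_2|^{q-2}\nabla u_2\bigr)\nabla(u_1 - u_2)\,dx = 0.
\]
Each of the three summands is nonnegative: the first two because Anzellotti's theory gives $(z_i, Du_j) \leq \|z_i\|_\infty |Du_j| \leq |Du_j|$ as measures (using $\|z_i\|_\infty \leq 1$), and the third by the usual monotonicity of $\xi \mapsto |\xi|^{q-2}\xi$. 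Hence all three summands must vanish.

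From the vanishing of the third summand together with strict monotonicity of the $q$-Laplacian, I would conclude $\nabla u_1 = \nabla u_2$ a.e.\ on $\{a > 0\}$. Here the Muckenhoupt assumption plays a decisive role: by the very definition of $A_q$ stated in the paper, $a > 0$ a.e.\ in $\Omega$ (on a set of positive measure where $a = 0$ the integral of $a^{-1/(q-1)}$ would diverge), so $\{a > 0\}$ has full Lebesgue measure and thus $\nabla u_1 = \nabla u_2$ a.e.\ in $\Omega$. Applying the weighted Poincar\'e inequality in $W_0^{1,\theta_0}(\Omega)$ (a classical consequence of $a \in A_q$) to $w = u_1 - u_2 \in W_0^{1,\theta_0}(\Omega)$ then gives $\|w\|_{\theta_0} \leq c\,\|\nabla w\|_{\theta_0} = 0$, and hence $u_1 = u_2$ a.e.\ in $\Omega$.

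The main technical point is justifying that $u_1, u_2$ themselves are admissible test functions, i.e.\ that the weak formulation of the Dirichlet problem extends to $BV(\Omega) \cap W_0^{1,\theta_0}(\Omega)$ test functions — but this is exactly the content of the Remark after Definition \ref{solution}, obtained via Anzellotti's Green's formula (\ref{Green}). A secondary subtlety is that the $q$-Laplacian monotonicity initially only controls what happens on the $q$-phase $\{a > 0\}$; it is essential to recognise that the Muckenhoupt hypothesis automatically rules out $a$ vanishing on a set of positive measure, so that the $p$-phase $\{a = 0\}$ is Lebesgue-null and no extra comparison argument on it is required. After these observations the proof reduces to a clean combination of Anzellotti's inequality, strict monotonicity of the $q$-Laplacian, and the weighted Poincar\'e inequality, with no reappearance of the approximating solutions $(u_p)$.
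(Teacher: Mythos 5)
Your proposal is correct and follows essentially the same route as the paper: cross-testing $u_1$ and $u_2$ in the weak formulation from the Remark after Definition \ref{solution}, adding, invoking $(z_i,Du_j)\le |Du_j|$ from Anzellotti's theory, using strict monotonicity of the weighted $q$-Laplacian to get $\nabla u_1=\nabla u_2$ a.e., and concluding via the Poincar\'e inequality. Your extra observation that the $A_q$ hypothesis forces $a>0$ a.e.\ (so the monotonicity argument covers all of $\Omega$) is a welcome clarification of a point the paper leaves implicit.
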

\begin{proof}
Let $u_1,u_2\in BV(\Omega)\cap W_0^{\theta_0}(\Omega)$ be two solutions of (\ref{1_laplacian}). Thus, there exist two vector fields $z_1,z_2\in L^\infty(\Omega ; \mathbb R^n)$ such that the conditions of Definition \ref{solution} are satisfied. Testing with $u_2$ in the weak formulation of $u_1$ we obtain
\begin{equation}
\label{w1}
\int_\Omega |Du_1| - \int_\Omega (z_1,Du_2)+ \int_\Omega a(x) |\nabla u_1|^{q-2} \nabla u_1 \nabla (u_1-u_2)\,dx = \int_\Omega f (u_1-u_2)\,dx.\
\end{equation}
Analogously we get,
\begin{equation}
\label{w2}
\int_\Omega |Du_2| - \int_\Omega (z_2,Du_1)+ \int_\Omega a(x) |\nabla u_2|^{q-2} \nabla u_2 \nabla (u_2-u_1)\,dx = \int_\Omega f (u_2-u_1)\,dx.\
\end{equation}
Adding (\ref{w1}) and (\ref{w2}) we have
\begin{eqnarray*}
\int_\Omega |Du_1| &+&\int_\Omega |Du_2| -\int_\Omega (z_1,Du_2) -\int_\Omega (z_2,Du_1) +\\
&+ &\int_\Omega a(x) \left(|\nabla u_1|^{q-2} \nabla u_1-|\nabla u_2|^{q-2} \nabla u_2 \right) \nabla (u_1-u_2)\,dx =0.\
\end{eqnarray*}
We also know that
\begin{equation*}
\int_\Omega (z_1,Du_2) \le \int_\Omega |Du_2|\text{ and }
\int_\Omega (z_2,Du_1) \le \int_\Omega |Du_1|
\end{equation*}
since $\|z_1\| \le 1$ and $\|z_2\| \le 1$. Hence
\begin{equation*}
\int_\Omega a(x) \left(|\nabla u_1|^{q-2} \nabla u_1-|\nabla u_2|^{q-2} \nabla u_2 \right) \nabla (u_1-u_2)\,dx \le 0
\end{equation*}
which implies that
\begin{equation*}
\int_\Omega a(x) \left(|\nabla u_1|^{q-2} \nabla u_1-|\nabla u_2|^{q-2} \nabla u_2 \right) \nabla (u_1-u_2)\,dx = 0
\end{equation*}
since, by convexity, the integrand is non-negative. We conclude that
\[\nabla u_1=\nabla u_2\,,\text{a.e. in }\Omega\]
and so Poincar\'e's inequality yields $u_1=u_2$ a.e. in $\Omega$.
\end{proof}


\begin{thebibliography}{00}
\bibitem{Anzellotti} G. Anzellotti, Pairings between measures and bounded functions and compensated compactness, Ann Mat. Pura Appl. 135 (1983) 293-318.
\bibitem{Buttazzo} H. Attouch, G. Buttazzo, and G. Michaille, Variational Analysis in Sobolev and BV Spaces, MPS–SIAM Series on Optimization, Philadelphia, 2006.
\bibitem{mingione_1} P. Baroni, M. Colombo, G. Mingione, Harnack inequalities for double phase functionals, Nonlinear Analysis: Theory, Methods and Applications 121 (2015) 206-222.
\bibitem{Boccardo} L. Boccardo, F. Murat, J.-P. Puel, Existence of bounded solutions for nonlinear elliptic unilateral problems, Ann. Mat. Pura Appl. 152 (1988) 183-196.
\bibitem{Colasuonno} F. Colasuonno, M. Squassina, Eigenvalues for double phase variational integrals, Ann.
Mat. Pura Appl. 195 (2016) 1917–1959.
\bibitem{mingione_2} M. Colombo, G. Mingione, Regularity for Double Phase Variational Problems, Arch Rational Mech Anal. 215 (2015) 443–496.
\bibitem{harjulehto_2} M. Eleuteri, P. Harjulehto, P. Hasto, Minimizers of abstract generalized orlicz-bounded variation energy I, (2021)	arXiv:2112.06622.
\bibitem{harjbook} P. Harjulehto, P. Hasto, Orlicz Spaces and Generalized Orlicz Spaces, Lecture notes in Mathematics 2236, Springer Nature Switzerland, 2019.
\bibitem{harjulehto_1} P. Harjulehto, P. Hasto, Double phase image restoration, J. Math. Anal. Appl. 501 (2021) 123832.
\bibitem{kufner} A. Kufner, Weighted Sobolev Spaces, Wiley, New York, 1985.
\bibitem{liu} W. Liu, G. Dai, Existence and multiplicity results for double phase problem, J. Differential
Equations 265 (2018)  4311–4334.
\bibitem{Mercaldo_1} A. Mercaldo, J. D. Rossi, S.Segura de Leon, C. Trombetti, Anisotropic p, q Laplacian equations when p goes to 1, Nonlinear Analysis 73 (2010) 3546-3560.
\bibitem{Mercaldo_2}  A. Mercaldo, J. D. Rossi, S.Segura de Leon, C. Trombetti, On the behaviour of solutions to the Dirichlet problem for the $p(x)$-Laplacian when $p(x)$ goes to 1 in a subdomain, Differential and Integral Equetions 25 (2012) 53-74,
\bibitem{mingione_3} G. Mingione, V.D. R{\u a}dulescu, Recent developments in problems with nonstandard growth
and nonuniform ellipticity, J. Math. Anal. Appl. 501 (2021), no. 1, Paper no. 125197
\bibitem{pap_1} N.S. Papageorgiou, V.D. R{\u a}dulescu, Y. Zhang, Anisotropic singular double phase Dirichlet problems, Discrete Contin. Dyn. Syst. Ser. S 14 (2021) 4465-4502.
\bibitem{pap_2} N.S. Papageorgiou, V.D. R{\u a}dulescu, Y. Zhang, Resonant double phase equations, Nonlinear Anal. Real World Appl. 64 (2022), Paper no. 103454.
\bibitem{paprev} N.S. Papageorgiou, Double-phase problems: a survey of some recent results, Opuscula
Math. 42 (2022) 257-278.
\bibitem{radulescu} V.D. R{\u a}dulescu, Isotropic and anisotropic double-phase problems: old and new, Opuscula
Math. 39 (2019) 259–279.
\bibitem{talenti} G. Talenti, Best constant in Sobolev inequality, Ann. Mat. Pura
Appl. 110 (1976) 353–372.
\bibitem{Zhikov} V. V. Zhikov, Averaging of functionals of the calculus of variations and elasticity theory, Izv. Acad. Nauk SSSR, Ser. Mat. 50 (1986) 675-710.
\bibitem{Zhikov2} V. V. Zhikov, Weighted Sobolev Spaces,  Sb. Math. 189 (1998) 1139-1170.

\end{thebibliography}
\end{document}